\newtheorem{theorem}{Theorem}[section]
\newtheorem{proposition}[theorem]{Proposition}
\theoremstyle{definition}
\theoremstyle{remark}
\numberwithin{equation}{section}
\begin{document}
\noindent
{\Large \textbf{Some Eigenvalue Inequalities for the Schrödinger Operator on Integer Lattices}}\vspace{1\baselineskip}\par\noindent
{\large \textbf{Wentao Liu} }
\vspace{1\baselineskip}\par
\noindent
{\large \textbf{Abstract}}:
In this paper, we establish analogues of the Payne-Pólya-Weinberger, Hile-Protter,
and Yang eigenvalue inequalities for the Schrödinger operator on arbitrary finite
subsets of the integer lattice $\mathbb{Z}^n$. The results extend known inequalities
for the discrete Laplacian to a more general class of Schrödinger operators with
nonnegative potentials and weighted eigenvalue problems.
\vspace{0.5\baselineskip}\par\noindent
{\large \textbf{Keyword}}:Schrödinger operator on graphs; eigenvalues; eigenvalue inequalities.
\par\noindent
\section{Introduction}
\noindent
There is a vast literature on eigenvalue problems for the Laplacian operator with
Dirichlet boundary conditions; see, for example, \cite{payne1955ratio,thompson1969ratio,hile1980inequalities,chen2016estimates}.
In the classical eigenvalue problem, let $\Omega \subset \mathbb{R}^n$ be a bounded
domain. We consider
\[
\begin{cases}
-\Delta u = \lambda u, & \text{in } \Omega, \\
u = 0, & \text{on } \partial \Omega.
\end{cases}
\]
\par\noindent
In 1956 Payne,Polya, and Weinberger \cite{payne1955ratio} proved the following universal inequalities for the $\lambda_i's$
in the case when $n=2$:
\begin{align*}
\lambda_{k+1}-\lambda_{k}\le \frac{2}{k} \sum_{i=1}^{k}\lambda_i,
\end{align*}
and can be extend to the case of general $n$ one has the following inequailty:
\begin{align*}
  \lambda_{k+1}-\lambda_{k}\le \frac{4}{nk} \sum_{i=1}^{k}\lambda_i,
\end{align*}
this inequality is known as the Payne-Pólya-Weinberger (PPW) inequality.
A stronger inequality was derived in 1980 by Hile and Protter \cite{hile1980inequalities}:
\begin{align*}
\sum_{i=1}^{k}\frac{\lambda_i}{\lambda_{k+1}-\lambda_k}\ge \frac{nk}{4},
\end{align*}
and a more sharp inequality was proved by Yang \cite{yang1991estimate}:
\begin{align*}
\sum_{i=1}^{k}(\lambda_{k+1}-\lambda_i)(\lambda_{k+1}-(1+\frac{4}{n})\lambda_i)\le0,
\end{align*}
this inequailty will be called as the Yang's first inequality.And it can be proved that
\begin{align*}
\lambda_{k+1}\le \frac{1}{k}(1+\frac{4}{n})\sum_{i=1}^{k}\lambda_i,
\end{align*}
which we called this Yang's second inequailty.
Moreover, Yang's first inequality implies Yang's second inequality, which in turn
implies the Hile-Protter inequality, and hence the PPW inequality:
\begin{align*}
Yang 1 \Rightarrow Yang 2 \Rightarrow HP \Rightarrow PPW.
\end{align*}\par
In the paper \cite{hua2023payne}, the authors considered eigenvalue problems for the
Dirichlet Laplacian on integer lattices. They established analogues of the
inequalities mentioned above, and under the condition $\lambda_k \le 1 + \frac{4}{n}$,
they also obtained relation (1.6).

In the present paper, we consider a more general class of operators than the
Laplacian. In particular, we study Schrödinger operators of the form
$-\Delta + V(x)$ with nonnegative potential $V \ge 0$ on $\Omega$, as well as
weighted eigenvalue problems. Specifically, we consider the eigenvalue problem
\[
\begin{cases}
-\Delta u + V u = \lambda \rho u, & \text{in } \Omega, \\
u = 0, & \text{on } \partial \Omega,
\end{cases}
\]
where the density function $\rho(x)$ is assumed to be positive.

A substantial body of literature has been developed on analysis on graphs,
 including the study of the graph Laplacian and its associated analytic and geometric properties;
  see, for example, \cite{grigor2018introduction,dodziuk1984difference,chung2000harnack,chung2000weighted}.
First, we recall some basic notions concerning Laplacian operators on discrete
spaces, namely graphs. Let $(V,E)$ be a simple, undirected, locally finite graph
with vertex set $V$ and edge set $E$, and assume that the graph has no isolated
vertices. Two vertices $x,y \in V$ are said to be neighbors, denoted by $x \sim y$,
if there exists an edge connecting them. The degree of a vertex $x$, denoted by
$d_x$, is defined as the number of neighbors of $x$.

The discrete Laplacian $\Delta$ on $(V,E)$ is defined by
\[
\Delta f(x) := \frac{1}{d_x} \sum_{y \in V:\, y \sim x} \bigl( f(y) - f(x) \bigr),
\qquad \forall\, f : V \to \mathbb{R}.
\]
\noindent
Let $\Omega$ be a finite subset of $V$. We define the vertex boundary of $\Omega$ by
\[
\partial \Omega := \{ y \in V \setminus \Omega : y \sim x \text{ for some } x \in \Omega \}.
\]
\noindent
We denote by $\ell^{2}(\Omega,\rho)$ the Hilbert space of all real-valued functions
defined on $\Omega$, equipped with the inner product
\[
\langle f, g \rangle := \sum_{x \in \Omega} f(x)\, g(x)\, \rho(x)\, d_x,
\qquad f,g : \Omega \to \mathbb{R}.
\]
\noindent
We consider the eigenvalue problem for the Schrödinger operator
\[
\begin{cases}
-\Delta u + V u = \lambda \rho u, & \text{in } \Omega, \\
u = 0, & \text{on } \partial \Omega.
\end{cases}
\]
\noindent
For a finite subset $\Omega$, we denote by $\widetilde{H} = (-\Delta + V)/\rho$
the associated operator and write $H = -\Delta + V$. The spectrum of
$\widetilde{H}$ consists of $N = |\Omega|$ real eigenvalues, which we label as
\[
0 < \lambda_1 \le \lambda_2 \le \cdots \le \lambda_N.
\]

In the following, we shall derive more general eigenvalue inequalities for this
class of operators.
\par\noindent
\begin{theorem}\label{thm:1.1} (Yang's first type inequality).
Let $\Omega$ be a finite subset of $\mathbb{Z}^n$, and let $\lambda_i$ denote the $i$-th eigenvalue of the Schrödinger problem on $\Omega$.
Then, for any $1 \le k \le \sharp \Omega - 1$, one has
\begin{align}
  \sum_{i}^{k}(\lambda_{k+1}-\lambda_i)\Bigl(\lambda_{k+1}\Bigl(\frac{\rho_{min}}{\rho_{max}}-{\rho_{min}}\lambda_i\Bigr)-\lambda_i\Bigl(\frac{\rho_{min}}{\rho_{max}}-{\rho_{min}}\lambda_i+\frac{4}{n}\Bigr)\Bigr)\le 0.
  \end{align}
\end{theorem}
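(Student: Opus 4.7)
The plan is to follow Yang's classical commutator trick, adapted to the discrete weighted Schrödinger setting. For each coordinate direction $\alpha\in\{1,\dots,n\}$ and each $i\le k$, I would introduce the trial function
\[
\phi_{\alpha,i}:=x_\alpha u_i-\sum_{j=1}^{k}a_{\alpha,ij}\,u_j,\qquad a_{\alpha,ij}:=\langle x_\alpha u_i,u_j\rangle,
\]
so that $\phi_{\alpha,i}$ is $\rho$-orthogonal to $u_1,\dots,u_k$. Rayleigh--Ritz then gives $\lambda_{k+1}\|\phi_{\alpha,i}\|^{2}\le\langle\widetilde H\phi_{\alpha,i},\phi_{\alpha,i}\rangle$; expanding the right-hand side via $Hu_j=\lambda_j\rho u_j$ and the identity $H(x_\alpha u_i)=\lambda_i\rho\,x_\alpha u_i-[\Delta,x_\alpha]u_i$ (which uses that the potential $V$ commutes with multiplication by $x_\alpha$) should, after all orthogonality cancellations, collapse to an inequality of the schematic form $(\lambda_{k+1}-\lambda_i)\|\phi_{\alpha,i}\|^{2}\le -2\langle\phi_{\alpha,i},[\Delta,x_\alpha]u_i\rangle_{0}$, where $\langle\cdot,\cdot\rangle_{0}$ is the unweighted inner product $\sum_{x}fg\,d_x$ (the $\rho$ drops because $\widetilde H=H/\rho$).

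The decisive lattice input is the explicit form of the commutator. Because every neighbour in $\mathbb{Z}^{n}$ differs from its base in a single coordinate by $\pm 1$, a direct computation gives
\[
[\Delta,x_\alpha]f(x)=\frac{1}{2n}\bigl(f(x+e_\alpha)-f(x-e_\alpha)\bigr),
\]
a bounded shift-difference operator. This boundedness plays the role that $\sum_\alpha\|\partial_\alpha u_i\|^{2}=\lambda_i$ plays in the continuous theory, and is what will eventually supply both the factor $4/n$ and the discrete correction $-\rho_{\min}\lambda_i$ in the final inequality, after summing $\alpha=1,\dots,n$ and using $(a-b)^{2}\le 2(a^{2}+b^{2})$ together with the shift invariance of the counting measure on $\mathbb{Z}^n$ (recall $u_i=0$ off $\Omega$).

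Next, I would multiply the previous estimate by $(\lambda_{k+1}-\lambda_i)$, apply Cauchy--Schwarz to the commutator pairing, and sum over $\alpha$ and $i$. The two weights $\rho_{\min}$ and $\rho_{\max}$ enter through the pointwise comparison $\rho_{\min}\|f\|_{0}^{2}\le\|f\|^{2}\le\rho_{\max}\|f\|_{0}^{2}$, which must be invoked whenever a quantity is transferred between the weighted inner product used in Rayleigh--Ritz and the unweighted one that is natural for the commutator. Inserting these bounds on the correct sides should produce the combination $\tfrac{\rho_{\min}}{\rho_{\max}}$ as the coefficient of $\lambda_{k+1}$ and $\rho_{\min}$ as the coefficient of $\lambda_i$ in the final left-hand side; a purely algebraic rearrangement then matches the target expression $\bigl(\tfrac{\rho_{\min}}{\rho_{\max}}-\rho_{\min}\lambda_i\bigr)$.

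The main obstacle I anticipate is precisely this last bookkeeping step: each Cauchy--Schwarz and each switch between $\langle\cdot,\cdot\rangle$ and $\langle\cdot,\cdot\rangle_{0}$ has to be oriented so that $\rho_{\min}$ and $\rho_{\max}$ accumulate on the correct sides. A careless ordering would produce weaker coefficients (for instance $\rho_{\min}^{2}/\rho_{\max}$ in place of $\rho_{\min}/\rho_{\max}$), so tracking the direction of every weighted inequality through the argument is the genuinely delicate part. Once the weights are handled consistently, the algebra collapses to the claimed Yang-type inequality.
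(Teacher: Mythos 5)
Your overall strategy is the one the paper uses: the same trial functions $\phi_{\alpha,i}=x_\alpha u_i-\sum_j a_{\alpha,ij}u_j$, the same Rayleigh--Ritz step, and your commutator $[\Delta,x_\alpha]u_i=\frac{1}{2n}\bigl(u_i(x+e_\alpha)-u_i(x-e_\alpha)\bigr)$ is exactly the paper's $2\Gamma(x_\alpha,u_i)$, so the reduction to $(\lambda_{k+1}-\lambda_i)\|\phi_{\alpha,i}\|^2\le-2\sum_x\phi_{\alpha,i}\,\Gamma(x_\alpha,u_i)\,d_x$ matches the paper's \eqref{eq:3.6}. However, there is one genuine gap at the decisive step: a \emph{plain} Cauchy--Schwarz applied to the commutator pairing, followed by multiplication by $(\lambda_{k+1}-\lambda_i)$ and summation over $i$, does not yield Yang's first inequality. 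The term $\sum_j(\lambda_i-\lambda_j)a_{ij}^2$ that survives in $-2\sum_x\phi_{\alpha,i}\Gamma(x_\alpha,u_i)\,d_x$ would then appear weighted by $(\lambda_{k+1}-\lambda_i)^2$, and $\sum_{i,j}(\lambda_{k+1}-\lambda_i)^2(\lambda_i-\lambda_j)a_{ij}^2$ is \emph{not} annihilated by the symmetry $a_{ij}=a_{ji}$ (it is in fact nonpositive, so it cannot simply be discarded from the left-hand side). This route, as written, closes only to the Hile--Protter-type statement (Theorem \ref{thm:1.3}), not to Theorem \ref{thm:1.1}.

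The missing idea is Yang's refinement of the Cauchy--Schwarz step: since $\phi_{\alpha,i}$ is $\rho$-orthogonal to $u_1,\dots,u_k$, you may replace $\Gamma(x_\alpha,u_i)$ by $\Gamma(x_\alpha,u_i)-\sum_j b_{ij}u_j\rho$ with $b_{ij}=\sum_x u_j\,\Gamma(x_\alpha,u_i)\,d_x$ \emph{before} applying Cauchy--Schwarz. Combined with the identity $2b_{ij}=(\lambda_i-\lambda_j)a_{ij}$ (the paper's Proposition 3.1, proved by Green's formula), this produces the extra negative term $-\sum_j(\lambda_i-\lambda_j)^2a_{ij}^2$ on the right, which merges with the existing $a_{ij}$-terms into $\sum_j(\lambda_{k+1}-\lambda_j)(\lambda_i-\lambda_j)a_{ij}^2$; only \emph{this} combination, after one further multiplication by $(\lambda_{k+1}-\lambda_i)$ and summation over $i$, vanishes by antisymmetry. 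A smaller bookkeeping point: the discrete correction $-\rho_{\min}\lambda_i$ does not come from the boundedness of the commutator per se, but from the term $I_{x_\alpha}(u_i)=\frac14\sum_{x,y}\mu_{xy}|\nabla_{xy}x_\alpha|^2|\nabla_{xy}u_i|^2$ arising when $2\sum_x x_\alpha u_i\Gamma(x_\alpha,u_i)d_x$ is symmetrized, which is controlled by $\sum_\alpha I_{x_\alpha}(u_i)\le\frac12\sum_x\Gamma(u_i)d_x\le\frac{\lambda_i}{2}$; your weight comparisons $\rho_{\min}\|\cdot\|_0^2\le\|\cdot\|^2\le\rho_{\max}\|\cdot\|_0^2$ are then applied exactly as you describe.
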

\noindent
\begin{theorem}\label{thm:1.2} (Yang's second type inequality).
Let $\Omega$ be a finite subset of $\mathbb{Z}^n$, and let $\lambda_i$ denote the $i$-th eigenvalue of the Schrödinger problem on $\Omega$.
Then, for any $1 \le k \le \sharp \Omega - 1$, one has
\begin{align}
  \lambda_{k+1}\Bigl(1-\frac{\rho_{max}}{k}\sum_{i}^{k}\lambda_i\Bigr)\le \frac{1}{k}\Bigl(1+\frac{4\rho_{max}}{n\rho_{min}}\Bigr)\sum_{i}^{k}\lambda_i-\frac{\rho_{max}}{k}\sum_{i}^{k}\lambda_i^2.
  \end{align}
\end{theorem}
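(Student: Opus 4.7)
The plan is to deduce Theorem~\ref{thm:1.2} from Theorem~\ref{thm:1.1} via a weighted Cauchy--Schwarz step followed by solving the resulting quadratic inequality in $\lambda_{k+1}$. For brevity I would write $\alpha=\rho_{min}/\rho_{max}$, $\beta=\rho_{min}$, $\gamma=4/n$, $u_i=\lambda_{k+1}-\lambda_i$, $w_i=\alpha-\beta\lambda_i$, $S_1=\sum_{i=1}^{k}\lambda_i$, $S_2=\sum_{i=1}^{k}\lambda_i^2$, $A=k\alpha-\beta S_1=\sum_{i=1}^k w_i$, $R=\alpha S_1-\beta S_2$, and $M=\lambda_{k+1}S_1-S_2=\sum_{i=1}^{k}\lambda_i u_i$. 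A direct check shows that the summand of Theorem~\ref{thm:1.1} factors as $u_i(u_i w_i-\gamma\lambda_i)$, so that Theorem~\ref{thm:1.1} is equivalent to the compact inequality $\sum_i u_i^2 w_i\le \gamma M$. Likewise, multiplying the statement of Theorem~\ref{thm:1.2} through by $k\alpha$ and collecting terms reduces it to the linear bound $A\lambda_{k+1}\le (\alpha+\gamma)S_1-\beta S_2$.

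I would then apply the weighted Cauchy--Schwarz inequality $(\sum u_i w_i)^2\le (\sum w_i)(\sum u_i^2 w_i)$, which is valid provided $w_i\ge 0$. Combined with the reformulated Theorem~\ref{thm:1.1} and the identity $\sum u_i w_i=A\lambda_{k+1}-R$, this yields $(A\lambda_{k+1}-R)^2\le A\gamma M$. Reading this as a quadratic inequality $A^2\lambda_{k+1}^2-A(2R+\gamma S_1)\lambda_{k+1}+(R^2+\gamma A S_2)\le 0$, the discriminant simplifies, via the algebraic identity $RS_1-AS_2=\alpha(S_1^2-kS_2)$, to $\Delta=A^2\gamma[(\gamma+4\alpha)S_1^2-4\alpha kS_2]$. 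The standard Cauchy--Schwarz estimate $S_1^2\le kS_2$ then gives $\Delta\le A^2\gamma^2 S_1^2$, hence $\sqrt{\Delta}\le |A|\gamma S_1$. The upper root of the quadratic (for $A>0$; the case $A<0$ is treated symmetrically via the lower root) is thereby bounded by $(R+\gamma S_1)/A$, which rearranges to exactly $A\lambda_{k+1}\le (\alpha+\gamma)S_1-\beta S_2$, i.e.\ to Theorem~\ref{thm:1.2}.

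The main obstacle is the sign requirement $w_i\ge 0$, equivalently $\lambda_i\le 1/\rho_{max}$ for all $i\le k$, which is needed to apply the weighted Cauchy--Schwarz inequality in its standard form. When this condition fails for some $i$, one must either split the summation according to the sign of $w_i$ and control the resulting pieces separately, or exploit the observation that the prefactor $1-\rho_{max}S_1/k$ on the left-hand side of Theorem~\ref{thm:1.2} simultaneously changes sign, which reorients the estimate naturally. The remaining steps---the quadratic expansion, the discriminant simplification via $RS_1-AS_2=\alpha(S_1^2-kS_2)$, and the final rearrangement into the displayed form---are lengthy but essentially mechanical once the above identities and the compact form of Theorem~\ref{thm:1.1} are in hand.
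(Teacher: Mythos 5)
Your algebraic reformulations are all correct: the summand of Theorem~\ref{thm:1.1} does factor as $u_i(u_iw_i-\gamma\lambda_i)$, Theorem~\ref{thm:1.2} is equivalent to $A\lambda_{k+1}\le R+\gamma S_1$, the identity $RS_1-AS_2=\alpha(S_1^2-kS_2)$ holds, and the discriminant computation is right. But the proof has a genuine gap at its central step: the weighted Cauchy--Schwarz inequality $(\sum_i u_iw_i)^2\le(\sum_i w_i)(\sum_i u_i^2w_i)$ requires every weight $w_i=\rho_{min}(1/\rho_{max}-\lambda_i)$ to be nonnegative, i.e.\ $\lambda_i\le 1/\rho_{max}$ for all $i\le k$. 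Nothing in the setting guarantees this (for the normalized discrete Laplacian alone the eigenvalues already range up to $2$, and the potential $V/\rho$ only pushes them higher), and you correctly identify the problem but do not resolve it. The two escape routes you sketch do not close it: the sign of the \emph{aggregate} $A=\sum_i w_i$ (which indeed matches the sign of the prefactor $1-\rho_{max}S_1/k$) is irrelevant when an \emph{individual} $w_i$ is negative, since the Cauchy--Schwarz step fails pointwise in that case even for $A>0$; and the proposed ``lower root'' treatment for $A<0$ cannot work because the inequality $(A\lambda_{k+1}-R)^2\le A\gamma M$ was never established there ($M=\sum_i\lambda_iu_i\ge0$, so its right-hand side would be nonpositive). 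As written, your argument proves Theorem~\ref{thm:1.2} only under the additional hypothesis $\lambda_k\le 1/\rho_{max}$.

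The paper avoids this entirely by not deducing Theorem~\ref{thm:1.2} from Theorem~\ref{thm:1.1} at all. Instead it returns to the pre-summation inequalities \eqref{eq:3.13} and \eqref{eq:3.17}, averages them so that the cross term becomes $\sum_{j}(\lambda_{k+1}-\tfrac{\lambda_i+\lambda_j}{2})(\lambda_i-\lambda_j)a_{ij}^2$, and then sums over $i$: since $a_{ij}=a_{ji}$, this double sum vanishes by antisymmetry in $(i,j)$, and the remaining terms are estimated using only $\lambda_{k+1}-\lambda_i\ge0$ together with \eqref{eq:3.9}, \eqref{eq:2.4} and \eqref{eq:3.1}. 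No sign condition on $1/\rho_{max}-\lambda_i$ is ever needed, because the factor $\tfrac{1}{2n}\sum_xu_i^2\,d_x-I_g(u_i)$ is bounded below \emph{before} any Cauchy--Schwarz in the index $i$ is applied. If you want to salvage your route, you would need either to prove the auxiliary bound $\lambda_k\le 1/\rho_{max}$ (which is false in general) or to replace the weighted Cauchy--Schwarz step by an argument valid for signed weights; the cleaner fix is to adopt the paper's averaging-and-cancellation strategy.
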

\noindent
\begin{theorem}\label{thm:1.3} (Hile--Protter type inequality).
Let $\Omega$ be a finite subset of $\mathbb{Z}^n$, and let $\lambda_i$ denote the $i$-th eigenvalue of the Schrödinger problem on $\Omega$.
Then, for any $1 \le k \le \sharp \Omega - 1$, one has
\begin{align}
  \sum_{i}^{k}\frac{\lambda_i}{\lambda_{k+1}-\lambda_i}\ge \frac{nk}{4}\Bigl(\frac{\rho_{min}}{\rho_{max}}-\frac{\rho_{min}}{k}\sum_{i}^{k}\lambda_i\Bigr).
  \end{align}
\end{theorem}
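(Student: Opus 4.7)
The plan is to adapt the trial-function argument underlying Theorems \ref{thm:1.1} and \ref{thm:1.2} to the Hile--Protter setting. Let $u_1,\dots,u_k$ be the first $k$ eigenfunctions, orthonormalized in $\ell^2(\Omega,\rho)$. For each coordinate $\alpha\in\{1,\dots,n\}$ and each $i\le k$, introduce the test function
\[
\phi_i^\alpha \;=\; x_\alpha u_i \;-\; \sum_{j=1}^{k} a_{ij}^\alpha\, u_j, \qquad a_{ij}^\alpha \;=\; \langle x_\alpha u_i, u_j\rangle_\rho,
\]
so that $\phi_i^\alpha$ is $\rho$-orthogonal to $u_1,\dots,u_k$. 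Testing $\lambda_{k+1}$ against $\phi_i^\alpha$ in the min--max principle, using $-\Delta u_i + Vu_i = \lambda_i\rho u_i$, and noting that $V$ commutes with $x_\alpha$, one obtains
\[
(\lambda_{k+1}-\lambda_i)\,\|\phi_i^\alpha\|_\rho^2 \;\le\; -\bigl\langle [\Delta,x_\alpha]\,u_i,\ \phi_i^\alpha\bigr\rangle_d,
\]
where $\langle\cdot,\cdot\rangle_d$ is the $d_x$-weighted inner product ($d_x=2n$ on $\mathbb{Z}^n$). This is the same fundamental inequality that drives the proofs of Theorems \ref{thm:1.1} and \ref{thm:1.2}.

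The key new ingredient is how this inequality is combined. Using the $\rho$-self-adjointness of $-\Delta+V$, one checks that $\langle[\Delta,x_\alpha]u_i, u_j\rangle_d = (\lambda_i-\lambda_j)\,a_{ij}^\alpha$, which lets us rewrite the fundamental inequality as
\[
(\lambda_{k+1}-\lambda_i)\,\|x_\alpha u_i\|_\rho^2 \;-\; \sum_{j=1}^{k}(\lambda_{k+1}-\lambda_j)(a_{ij}^\alpha)^2 \;\le\; W_i^\alpha,
\]
with $W_i^\alpha := -\langle[\Delta,x_\alpha]u_i, x_\alpha u_i\rangle_d$. Instead of multiplying by $(\lambda_{k+1}-\lambda_i)$ as in Theorems \ref{thm:1.1} and \ref{thm:1.2}, I would divide by $(\lambda_{k+1}-\lambda_i)$, sum in $i$ and $\alpha$, and symmetrize the resulting double $(i,j)$-sum via $a_{ij}^\alpha=a_{ji}^\alpha$. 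The AM--GM bound $\tfrac{1}{2}\bigl(\tfrac{\lambda_{k+1}-\lambda_j}{\lambda_{k+1}-\lambda_i}+\tfrac{\lambda_{k+1}-\lambda_i}{\lambda_{k+1}-\lambda_j}\bigr)\ge 1$ collects the off-diagonal contributions into a non-negative residual that can be discarded. After using $\sum_\alpha x_\alpha^2 = |x|^2$, what survives on the left is controlled from below by a multiple of $\sum_i \lambda_i/(\lambda_{k+1}-\lambda_i)$.

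The right-hand side is then evaluated directly on $\mathbb{Z}^n$. From $[\Delta,x_\alpha]u_i(x)=\tfrac{1}{2n}\bigl(u_i(x+e_\alpha)-u_i(x-e_\alpha)\bigr)$ and discrete summation by parts, $\sum_\alpha W_i^\alpha$ reduces to an expression in the Dirichlet energy $\langle-\Delta u_i, u_i\rangle_d \le \lambda_i\|u_i\|_\rho^2 = \lambda_i$ (the last bound uses $V\ge 0$). The weight conversions $\rho_{min}\|\cdot\|_d^2 \le \|\cdot\|_\rho^2 \le \rho_{max}\|\cdot\|_d^2$ then produce exactly the factors $\rho_{min}/\rho_{max}$ and $\rho_{min}$ appearing on the right-hand side of Theorem \ref{thm:1.3}. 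The main obstacle is precisely this bookkeeping: the LHS of the fundamental inequality carries the $\rho$-weight while the commutator terms naturally live in $\langle\cdot,\cdot\rangle_d$, so one has to keep careful track of $\rho_{min}$ and $\rho_{max}$ at each step in order to retain the sharp constants $nk/4$ and $n/4$. A secondary delicate point is that, in the presence of the weight, the Hile--Protter symmetrization step must be done carefully so that discarding the non-negative off-diagonal residuals genuinely preserves the inequality sign.
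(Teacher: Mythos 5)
Your setup (the trial functions $\phi_i^\alpha$, the Rayleigh--quotient inequality, and the commutator identity $\langle[\Delta,x_\alpha]u_i,u_j\rangle_d=(\lambda_i-\lambda_j)a_{ij}^\alpha$, which is the paper's $2b_{ij}=(\lambda_i-\lambda_j)a_{ij}$) matches the paper, and so does the correct instinct that Hile--Protter comes from \emph{dividing} by $\lambda_{k+1}-\lambda_i$ rather than multiplying. But the step where you rearrange the fundamental inequality into
$(\lambda_{k+1}-\lambda_i)\|x_\alpha u_i\|_\rho^2-\sum_j(\lambda_{k+1}-\lambda_j)(a_{ij}^\alpha)^2\le W_i^\alpha$
takes you off the road. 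First, you have skipped the Cauchy--Schwarz step (the paper's (3.10)--(3.11)) that replaces the cross term $-2\sum_x\phi_i\Gamma(g,u_i)\,d_x$ by $4\sum_x\rho^{-1}\Gamma(g,u_i)^2d_x$ divided by $\lambda_{k+1}-\lambda_i$. That squared-gradient quantity, summed over $\alpha$ and estimated via Lemma 2.3 and \eqref{eq:3.1}, is precisely where the factor $\lambda_i$ in the numerator of $\sum_i\lambda_i/(\lambda_{k+1}-\lambda_i)$ comes from; your $W_i^\alpha$ satisfies $\sum_\alpha W_i^\alpha=\tfrac12\sum_xu_i^2d_x-\tfrac12\sum_x\Gamma(u_i)d_x$, which is of order a constant, not of order $\lambda_i$, so dividing it by $\lambda_{k+1}-\lambda_i$ cannot produce the Hile--Protter sum. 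Second, your left-hand side now carries $\|x_\alpha u_i\|_\rho^2$, a moment of $u_i$ that is not translation invariant on $\mathbb{Z}^n$ and admits no universal lower bound; the quantity that must survive there is instead $W_i^\alpha\ge\frac{1}{2n}\sum_xu_i^2d_x-I_{x_\alpha}(u_i)$, which after summing over $\alpha$ gives the $\frac{1}{2\rho_{max}}-\frac{\lambda_i}{2}$ that becomes the bracket $\frac{\rho_{min}}{\rho_{max}}-\frac{\rho_{min}}{k}\sum_i\lambda_i$ in the theorem.

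Third, the AM--GM symmetrization is applied in the wrong direction: with the off-diagonal term sitting as $-\sum_{i,j}\frac{\lambda_{k+1}-\lambda_j}{\lambda_{k+1}-\lambda_i}(a_{ij}^\alpha)^2$ on the small side of the inequality, the bound $\tfrac12\bigl(t+t^{-1}\bigr)\ge1$ only makes that side \emph{smaller}, so discarding the residual does not preserve the inequality in the direction you need. In fact no AM--GM is required at all. The paper's route is: take \eqref{eq:3.17}, divide by $\lambda_{k+1}-\lambda_i$ to get
\begin{align*}
\frac{1}{2n}\sum_{x}u_i(x)^2d_x-I_g(u_i)+\sum_{j}(\lambda_i-\lambda_j)a_{ij}^2\le \frac{4\sum_{x\in\Omega}\rho^{-1}(x)\Gamma(g,u_i)^2d_x}{\lambda_{k+1}-\lambda_{i}},
\end{align*}
then sum over $i$ and over the coordinates $g=x_\alpha$; the double sum $\sum_{i,j}(\lambda_i-\lambda_j)a_{ij}^2$ vanishes \emph{exactly} by antisymmetry (since $a_{ij}=a_{ji}$), and the estimates \eqref{eq:2.4}, \eqref{eq:3.1}, \eqref{eq:3.9} give the stated constants. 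To repair your argument, keep the Cauchy--Schwarz step and the paper's arrangement of the $a_{ij}$ terms before dividing.
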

\noindent
\begin{theorem}\label{thm:1.4} (Payne--Pólya--Weinberger type inequality).
Let $\Omega$ be a finite subset of $\mathbb{Z}^n$, and let $\lambda_i$ denote the $i$-th eigenvalue of the Schrödinger problem on $\Omega$.
Then, for any $1 \le k \le \sharp \Omega - 1$, one has
\begin{align}
  (\lambda_{k+1}-\lambda_k)\Bigl(\frac{\rho_{min}}{\rho_{max}}-\frac{\rho_{min}}{k}\sum_{i}^{k}\lambda_i\Bigr)\le\frac{4}{nk}\sum_{i}^{k}\lambda_i.
  \end{align}
\end{theorem}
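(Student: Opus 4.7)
The plan is to obtain Theorem~\ref{thm:1.4} as a direct corollary of the Hile--Protter type bound in Theorem~\ref{thm:1.3}, mirroring the classical implication $\text{HP}\Rightarrow\text{PPW}$ recalled in the introduction. The idea is purely algebraic: replace the variable gaps $\lambda_{k+1}-\lambda_i$ in the denominators appearing in Theorem~\ref{thm:1.3} by the smallest gap $\lambda_{k+1}-\lambda_k$, and then rearrange. No new eigenfunction trial calculation should be needed; the real work has already been done in establishing Theorem~\ref{thm:1.3}.

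First I would note that, since the eigenvalues are non-decreasing, $\lambda_{k+1}-\lambda_i\ge \lambda_{k+1}-\lambda_k$ for every $1\le i\le k$. One may assume $\lambda_{k+1}>\lambda_k$, because otherwise the left-hand side of~(1.4) vanishes while the right-hand side is non-negative, and the conclusion is immediate. Inverting the inequality, multiplying by $\lambda_i>0$, and summing from $i=1$ to $k$ yields
\[
\sum_{i=1}^{k}\frac{\lambda_i}{\lambda_{k+1}-\lambda_i}\;\le\;\frac{1}{\lambda_{k+1}-\lambda_k}\sum_{i=1}^{k}\lambda_i.
\]
Chaining this with Theorem~\ref{thm:1.3} gives
\[
\frac{1}{\lambda_{k+1}-\lambda_k}\sum_{i=1}^{k}\lambda_i\;\ge\;\frac{nk}{4}\Bigl(\frac{\rho_{\min}}{\rho_{\max}}-\frac{\rho_{\min}}{k}\sum_{i=1}^{k}\lambda_i\Bigr),
\]
and multiplying through by $\tfrac{4(\lambda_{k+1}-\lambda_k)}{nk}>0$ produces exactly inequality~(1.4).

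The only subtle point, and the nearest thing to an obstacle, is the sign of the factor $\frac{\rho_{\min}}{\rho_{\max}}-\frac{\rho_{\min}}{k}\sum_{i=1}^{k}\lambda_i$ on the left of~(1.4). If it is non-positive, the left-hand side of~(1.4) is itself $\le 0$ while the right-hand side is strictly positive, so the inequality is immediate; if it is positive, the Hile--Protter bound of Theorem~\ref{thm:1.3} supplies a strictly positive lower bound, making the rearrangement above legitimate. Beyond tracking these two cases, I expect the argument to be a routine algebraic reduction, essentially identical to the classical one in the continuum setting.
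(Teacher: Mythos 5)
Your proposal is correct and follows essentially the same route as the paper: the paper's one-line proof (``change $\lambda_{k+1}-\lambda_i$ into $\lambda_{k+1}-\lambda_k$, with the same procedure'') is exactly the classical $\mathrm{HP}\Rightarrow\mathrm{PPW}$ reduction that you carry out explicitly from the statement of Theorem~\ref{thm:1.3}. Your handling of the degenerate case $\lambda_{k+1}=\lambda_k$ is a welcome extra precaution (Theorem~\ref{thm:1.3} is not even well-posed there), while the case distinction on the sign of $\frac{\rho_{\min}}{\rho_{\max}}-\frac{\rho_{\min}}{k}\sum_{i=1}^k\lambda_i$ is harmless but unnecessary, since multiplying both sides of the chained inequality by the positive factor $\frac{4(\lambda_{k+1}-\lambda_k)}{nk}$ is valid regardless of that sign.
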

  \par
Moreover, when the density function satisfies $\rho(x) \equiv 1$, the above inequalities reduce to those obtained in \cite{hua2023payne}.

\section{Preliminaries}
\noindent
Throughout this section, we adopt the notation and conventions of \cite{hua2023payne}.
Let $G=(V,E)$ be a simple, undirected, and locally finite graph.
It is convenient to introduce the adjacency function
\begin{align*}
\mu : V \times V \to \{0,1\}, \qquad
\mu_{xy} =
\begin{cases}
1, & \text{if } x \sim y, \\
0, & \text{otherwise}.
\end{cases}
\end{align*}
With this notation, the degree of a vertex $x \in V$ is given by
\begin{align*}
d_x := \sum_{y \in V} \mu_{xy}.
\end{align*}
\noindent
We denote by $\mathbb{Z}^n$ the $n$-dimensional integer lattice graph, whose edge set is defined as
\begin{align*}
E
=
\bigl\{
\{x,y\} \subset \mathbb{Z}^n :
\sum_{i=1}^{n} |x_i - y_i| = 1
\bigr\}.
\end{align*}
\noindent
Given a function $f : \mathbb{Z}^n \to \mathbb{R}$ and two vertices $x,y \in \mathbb{Z}^n$,
we define the discrete gradient along the edge $(x,y)$ by
\begin{align*}
\nabla_{xy} f := f(y) - f(x).
\end{align*}
\par
Under this notation,we also have Green's formula on graphs,see,for example, \cite{grigor2018introduction}.\par\noindent
$\mathbf{Proposition\; 2.1}$ Let $(V,E)$ be a graph, $f$ be a function with finite support on $V$ and $g$ be any function
on $V$. Then
\begin{align}
\sum_{x\in V}(\Delta f)(x)g(x)d_x=-\frac{1}{2}\sum_{x,y \in V}\mu_{xy}\nabla_{xy}f\nabla_{xy}g=\sum_{x\in V}(\Delta g)(x)f(x)d_x.
\end{align}\par
We further introduce the following discrete analogue of the squared gradient of a function.
 \par\noindent
$\mathbf{ Definition\; 2.2}$ The gradient form\:$\Gamma$\:,which called the "carré du champ" operator,is defined by,for $f,g:V \rightarrow \mathbb{R} \;\text{and}\: x\in V$,
 \begin{align}\label{eq:2.2}
  \Gamma(f,g)(x)&=\frac{1}{2}(\Delta(fg)-f\Delta g-g\Delta f)(x)\notag\\&=\frac{1}{2d_x}\sum_{y}\mu_{xy} \nabla_{xy}f\nabla_{xy}g. \end{align}\par\noindent
 $\mathbf{Lemma \; 2.3}$ Let $u$ be a function on\:$ \mathbb{Z}^n$ \:of finite support and $\{x_\alpha\}_{\alpha =1}^{n}$be standard coordinate functions.Then\par\noindent

 \begin{align}\label{eq:2.3}
  \sum_{\alpha = 1}^{n}\frac{1}{2}\sum_{x,y}{}|\nabla_{xy}(x_\alpha)|^2|\nabla_{xy}u|^2\mu_{xy}=\sum_{x}{}\Gamma(u)(x)d_x,\end{align}\par\noindent

 \begin{align}\label{eq:2.4}
  \sum_{\alpha=1}^{n}\sum_{x}{}\Gamma(x_\alpha,u)^2(x)d_x\le\frac{1}{2n}\sum_{x}{}\Gamma(u)(x)d_x.\end{align}\par
 Details can be found in \cite{hua2023payne}.
 \section{Proof of the Main Results}
\noindent
In this section, we prove the main results by following the approach of
\cite{ashbaugh2002universal,hua2023payne}.\par
Let $\Omega$ be a finite subset of $\mathbb{Z}^n$ and $\lambda_k$ be the $k$-th eigenvalue of Schrödinger problem on $\Omega$. 
Let $k \le \sharp\Omega -1.$ For any $1 \le i \le k$, let $\{u_i\}_{i=1}^k$ be the normalized eigenfunctions associated with the first
$k$ eigenvalues $\lambda_i$, that is,
\begin{align*}
-\Delta u_i(x) + V(x) u_i(x) &= \lambda_i \rho(x) u_i(x), \\
\sum_{x \in \Omega} u_i(x) u_j(x)\,\rho(x)\, d_x &= \delta_{ij},
\qquad 1 \le i,j \le k.
\end{align*}
And it can be easily check that:
\begin{align*}
\sum_{x\in \mathbb{Z}^n}(-\Delta+V)(u_i)(x)u_i(x)d_x=\lambda_i.
\end{align*}
Throughtout this paper,we consider the zero extension of functions unless specified otherwise,because $V$ is 
nonnegative and Green's formula we have:
\begin{align}\label{eq:3.1}
\sum_{x\in \mathbb{Z}^n}\Gamma(u_i)(x)d_x\le \lambda_i.
\end{align}
Let $g$ be one of the coordinate functions, namely $g(x) = x_\alpha$ for some
$1 \le \alpha \le n$. By \eqref{eq:2.2} and the identity
$\Delta(g^2) = 2 g \Delta g + 2 \Gamma(g)$, we have
\begin{align}
\Delta g = 0, \qquad \Delta(g^2) = \frac{1}{n}.
\end{align}
For each $1 \le i \le k$, define
\begin{align*}
\phi_i := g u_i - \sum_{j=1}^k a_{ij} u_j,
\end{align*}
where
\begin{align*}
a_{ij} := \sum_{x \in \mathbb{Z}^n} g(x)\, u_i(x)\, u_j(x)\, \rho(x)\, d_x,
\end{align*}\par\noindent
by construction, $\phi_i$ is orthogonal to $\{u_j\}_{j=1}^k$ in $\ell^2(\Omega,\rho)$, namely,
\begin{align*}
\sum_{x \in \Omega} \phi_i(x) u_j(x)\, \rho(x)\, d_x = 0,
\qquad 1 \le j \le k.
\end{align*}
Moreover, it is clear that $a_{ij} = a_{ji}$ for all $1 \le i,j \le k$.
\par\noindent
Next, we introduce
\begin{align*}
b_{ij} := \sum_{x \in \Omega} u_j(x)\, \Gamma(g,u_i)(x)\, d_x.
\end{align*}
\par
We have the following identity.

\begin{proposition}
For all $1 \le i,j \le k$, it holds that
\begin{align}
2 b_{ij} = (\lambda_i - \lambda_j) a_{ij}.
\end{align}
\end{proposition}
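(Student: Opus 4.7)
The plan is to derive this identity from the eigenvalue equation and the Leibniz-type product rule for $\Delta$ that underlies the carré du champ operator, using Green's formula to swap the Laplacian between factors in a bilinear form.

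First I would use the definition of $\Gamma$ in \eqref{eq:2.2}, rewritten as $\Delta(fg) = f\Delta g + g\Delta f + 2\Gamma(f,g)$, applied to the product $g u_i$ where $g(x) = x_\alpha$. Since $\Delta g = 0$ by (3.2), this simplifies to
\begin{align*}
\Delta(g u_i) = g\,\Delta u_i + 2\,\Gamma(g, u_i).
\end{align*}
This is the main algebraic input: it turns the commutator $[\Delta, g]$ into exactly $2\Gamma(g, u_i)$, which is the object that defines $b_{ij}$.

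Next I would evaluate $\sum_{x} \Delta(g u_i)(x)\,u_j(x)\,d_x$ in two different ways. On one hand, Green's formula (Proposition 2.1) lets me move the Laplacian onto the other factor:
\begin{align*}
\sum_{x} \Delta(g u_i)(x)\,u_j(x)\,d_x = \sum_{x} (g u_i)(x)\,\Delta u_j(x)\,d_x.
\end{align*}
On the other hand, using the product-rule identity above,
\begin{align*}
\sum_{x} \Delta(g u_i)\,u_j\,d_x
= \sum_{x} g\,u_j\,\Delta u_i\,d_x + 2\sum_{x} u_j\,\Gamma(g, u_i)\,d_x
= \sum_{x} g\,u_j\,\Delta u_i\,d_x + 2 b_{ij}.
\end{align*}

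Now I would substitute the eigenvalue equation $\Delta u_\ell = V u_\ell - \lambda_\ell \rho u_\ell$ into both expressions. The terms involving the potential $V$ take the symmetric form $\sum_x g\,V\,u_i\,u_j\,d_x$ on each side and thus cancel; similarly the $\rho$-weighted terms collapse into $\lambda_i a_{ij}$ and $\lambda_j a_{ij}$ respectively by the definition of $a_{ij}$. Equating the two expressions yields
\begin{align*}
-\lambda_j a_{ij} = -\lambda_i a_{ij} + 2 b_{ij},
\end{align*}
which rearranges to the claimed identity $2 b_{ij} = (\lambda_i - \lambda_j)a_{ij}$.

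The only point requiring mild care is the justification of Green's formula: since $\Omega$ is finite and we extend by zero outside $\Omega$, the function $u_i$ has finite support, but $g u_i$ also has finite support, so Proposition 2.1 applies directly without boundary contributions. I do not foresee any substantial obstacle; the identity is essentially a discrete version of the integration-by-parts trick $[\Delta, g] = 2\nabla g \cdot \nabla$ used in the continuous Payne--Pólya--Weinberger argument, with the crucial simplification that $\Delta g = 0$ for coordinate functions on $\mathbb{Z}^n$ and that the $V$-terms cancel by symmetry.
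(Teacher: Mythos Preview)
Your proposal is correct and follows essentially the same route as the paper: both arguments hinge on the product rule $\Delta(gu_i)=g\,\Delta u_i+2\Gamma(g,u_i)$ (using $\Delta g=0$), Green's formula to transfer the Laplacian, and the cancellation of the symmetric $V$-terms after substituting the eigenvalue equation. The only difference is cosmetic ordering---the paper starts from $\lambda_j a_{ij}=\sum_x g\,u_i\,Hu_j\,d_x$ and unwinds it, whereas you evaluate $\sum_x \Delta(gu_i)\,u_j\,d_x$ in two ways---but the computation is the same.
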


\begin{proof}
By Green's formula, we compute
\begin{align*}
\lambda_j a_{ij}
&= \sum_{x \in \Omega} g(x)\, u_i(x)\, H u_j(x)\, d_x \notag \\
&= \sum_{x \in \Omega} g(x)\, u_i(x)\, (-\Delta u_j(x))\, d_x
   + \sum_{x \in \Omega} V(x)\, g(x)\, u_i(x)\, u_j(x)\, d_x \notag \\
&= - \sum_{x \in \Omega} u_j(x)
   \bigl( (g\Delta ( u_i))(x) + 2 \Gamma(g,u_i)(x) \bigr) d_x
   + \sum_{x \in \Omega} V(x)\, g(x)\, u_i(x)\, u_j(x)\, d_x \notag \\
&= \sum_{x \in \Omega}g(x) H u_i(x)\, u_j(x)\, d_x
   - 2 \sum_{x \in \Omega} u_j(x)\, \Gamma(g,u_i)(x)\, d_x \notag \\
&= \lambda_i a_{ij} - 2 b_{ij}.
\end{align*}
This completes the proof.
\end{proof}
\begin{proof}[Proof of Theorem 1.1]
By the Rayleigh quotient, we have
\begin{align}
\lambda_{k+1} \sum_{x \in \Omega} \phi_i^2(x) \rho(x) d_x 
\leq \langle \phi_i, \widetilde{H} \phi_i \rangle 
= \sum_{x \in \Omega} \phi_i(x) (H \phi_i)(x) d_x,
\end{align}
\noindent
expanding the right-hand side, we obtain
\begin{align}
\sum_{x \in \Omega} \phi_i(x) (H \phi_i)(x) d_x 
&= \sum_{x \in \Omega} \phi_i(x) \Bigl(-\Delta \Bigl(g u_i - \sum_{j} a_{ij} u_j\Bigr) + V \Bigl(g u_i - \sum_{j} a_{ij} u_j\Bigr)\Bigr)(x) d_x \notag \\
&= \sum_{x \in \Omega} \phi_i(x) \Bigl( g H u_i - 2 \Gamma(g, u_i) - \sum_j a_{ij} H u_j \Bigr)(x) d_x \notag \\
&= \lambda_i \sum_{x \in \Omega} \phi_i^2(x) \rho(x) d_x - 2 \sum_{x \in \Omega} \phi_i(x) \Gamma(g, u_i)(x) d_x,
\end{align}
\noindent
rearranging yields the inequality
\begin{align}\label{eq:3.6}
\lambda_{k+1} - \lambda_i 
\leq \frac{-2 \sum_{x \in \Omega} \phi_i(x) \Gamma(g, u_i)(x) d_x}{\sum_{x \in \Omega} \phi_i^2(x) \rho(x) d_x}.
\end{align}
Next, we compute the numerator term as
\begin{align}
0 \le -2 \sum_{x \in \Omega} \phi_i(x) \Gamma(g, u_i)(x) d_x 
= -2 \sum_{x \in \Omega} (g u_i \Gamma(g, u_i))(x) d_x + \sum_j (\lambda_i - \lambda_j) a_{ij}^2,
\end{align}
and denote
\[
I := 2 \sum_{x \in \Omega} (g u_i \Gamma(g, u_i))(x) d_x.
\]
By expanding \( I \), we have
\begin{align}\label{eq:3.8}
I 
&= \sum_{x,y} \mu_{xy} g(x) u_i(x) \nabla_{xy} g \, \nabla_{xy} u_i \notag \\
&= \frac{1}{2} \sum_{x,y} \mu_{xy} \big( g(x) u_i(x) + g(y) u_i(y) \big) \nabla_{xy} g \, \nabla_{xy} u_i \notag \\
&= \frac{1}{4} \sum_{x,y} \mu_{xy} \nabla_{xy}(g^2) \, \nabla_{xy}(u_i^2) + I_g(u_i) \notag \\
&= -\frac{1}{2} \sum_{x \in \Omega} \Delta(g^2)(x) u_i^2(x) d_x + I_g(u_i) \notag \\
&= -\frac{1}{2n} \sum_{x \in \Omega} u_i^2(x) d_x + I_g(u_i),
\end{align}
where the correction term \( I_g(u_i) \) is defined by
\[
I_g(u_i) := \frac{1}{4} \sum_{x,y} \mu_{xy} |\nabla_{xy} g|^2 |\nabla_{xy} u_i|^2.
\]
For the simplified expression \eqref{eq:3.8}, by \eqref{eq:2.3}, \eqref{eq:2.4}, \eqref{eq:3.1} and the definition of $u_i$, we have the following estimaties hold for the two relevant terms:
\begin{align}\label{eq:3.9}
\sum_{x}u_i(x)^2d_x \ge \frac{1}{\rho_{max}},\qquad \sum_{\alpha =1}^{n}I_{x_\alpha}(u_i) \le \frac{\lambda_i}{2}.
\end{align}
We also have
\begin{align}
\Bigl(-2\sum_{x\in \Omega}&\phi_i(x)\Gamma(g,u_i)d_x\Bigr)^2=\Bigl(-2\sum_{x\in\Omega}\phi_i(x)\Bigl(\Gamma(g,u_i)-\sum_{j}b_{ij}u_j\rho(x)d_x\Bigr)\Bigr)^2\notag\\
&\le 4\Bigl(\sum_{x\in\Omega}\phi_i^2(x)\rho(x)d_x\Bigr)\Bigl(\sum_{x\in\Omega}(\rho^{-1}(x)\Gamma(g,u_i)^2d_x-2\Gamma(g,u_i)\sum_{j}b_{ij}u_jd_x)+\sum_{j}b_{ij}^2\Bigr)\notag\\
&=4\Bigl(\sum_{x\in\Omega}\phi_i^2(x)\rho(x)d_x\Bigr)\Bigl(\sum_{x\in\Omega}\rho^{-1}(x)\Gamma(g,u_i)^2d_x-\sum_{j}b_{ij}^2\Bigr),
\end{align}
consequently\begin{align}\lambda_{k+1}-\lambda_{i}\le \frac{-2\sum_{x\in \Omega}\phi_i(x)\Gamma(g,u_i)d_x}{\sum_{x}\phi_i^2(x)\rho(x)d_x}
\le \frac{4\sum_{x\in\Omega}\rho^{-1}(x)\Gamma(g,u_i)^2d_x-\sum_{j}(\lambda_i-\lambda_j)^2a_{ij}^2}
{\frac{1}{2n}\sum_{x}u_i(x)^2d_x-I_g(u_i)+\sum_{j}(\lambda_i-\lambda_j)a_{ij}^2},\end{align}
equivently\begin{align}(\lambda_{k+1}-\lambda_{i})\Bigl(\frac{1}{2n}&\sum_{x}u_i(x)^2d_x-I_g(u_i)+\sum_{j}(\lambda_i-\lambda_j)a_{ij}^2\Bigr)
\notag\\&\le 4\sum_{x\in\Omega}\rho^{-1}(x)\Gamma(g,u_i)^2d_x-\sum_{j}(\lambda_i-\lambda_j)^2a_{ij}^2,\end{align}
\noindent
rewriting the terms involving \(a_{ij}\), we obtain\begin{align}\label{eq:3.13}
  (\lambda_{k+1}-\lambda_{i})\Bigl(\frac{1}{2n}\sum_{x}u_i(x)^2d_x-I_g(u_i)\Bigr)+\sum_{j}^{k}(\lambda_{k+1}-\lambda_{j})(\lambda_i-\lambda_j)a_{ij}^2
\notag\\\le 4\sum_{x\in \Omega}\rho^{-1}(x)\Gamma(g,u_i)^2d_x.\end{align}
Multiplying both sides by \((\lambda_{k+1}-\lambda_i)\) and summing over \(i=1,\dots,k\) eliminates all terms containing \(a_{ij}\), and yields
\begin{align}
\sum_{i}^{k}(\lambda_{k+1}-\lambda_i)^2\Bigl(\frac{1}{2n}\sum_{x}u_i(x)^2d_x-I_g(u_i)\Bigr)\le 4\sum_{i}^{k}(\lambda_{k+1}-\lambda_i)\sum_{x\in \Omega}\rho^{-1}(x)\Gamma(g,u_i)^2d_x.
\end{align}
We now proceed exactly by promoting $g$ to $x_\alpha$ and summing on $\alpha$ from $1$ to $n$ ,noting that \eqref{eq:3.1}, \eqref{eq:2.4} and \eqref{eq:3.9} to get
\begin{align}
    \sum_{i}^{k}(\lambda_{k+1}-\lambda_i)^2\Bigl(\frac{1}{2\rho_{max}}-\frac{1}{2}\lambda_i\Bigr)&\le 4\sum_{i}^{k}(\lambda_{k+1}-\lambda_i)\sum_{x\in \Omega}\rho^{-1}(x)\Gamma(g,u_i)^2d_x \notag \\
&\le \frac{2}{n\rho_{min}}\sum_{i}^{k}(\lambda_{k+1}-\lambda_i)\lambda_i,
\end{align}
that is
\begin{align*}
\sum_{i}^{k}(\lambda_{k+1}-\lambda_i)\Bigl(\lambda_{k+1}\Bigl(\frac{\rho_{min}}{\rho_{max}}-{\rho_{min}}\lambda_i\Bigr)-\lambda_i\Bigl(\frac{\rho_{min}}{\rho_{max}}-{\rho_{min}}\lambda_i+\frac{4}{n}\Bigr)\Bigr)\le 0.
\end{align*}
Thus we prove Theorem \ref{thm:1.1} \qedhere
\end{proof}
\begin{proof}[Proof of Theorem 1.2]
By \eqref{eq:3.6},we also have
\begin{align}
  \lambda_{k+1}-\lambda_{i}\le \frac{-2\sum_{x\in \Omega}\phi_i(x)\Gamma(g,u_i)d_x}{\sum_{x}\phi_i^2(x)\rho(x)d_x}
    \le \frac{4\sum_{x\in\Omega}\rho^{-1}(x)\Gamma(g,u_i)^2d_x}
    {\frac{1}{2n}\sum_{x}u_i(x)^2d_x-I_g(u_i)+\sum_{j}(\lambda_i-\lambda_j)a_{ij}^2},
\end{align}
or
\begin{align}\label{eq:3.17}
    (\lambda_{k+1}-\lambda_{i})\Bigl(\frac{1}{2n}\sum_{x}u_i(x)^2d_x-I_g(u_i)+\sum_{j}(\lambda_i-\lambda_j)a_{ij}^2\Bigr)\le 4\sum_{x\in\Omega}\rho^{-1}(x)\Gamma(g,u_i)^2d_x,
\end{align}
If we simply average \eqref{eq:3.13} and \eqref{eq:3.17} we obtain

  \begin{align}(\lambda_{k+1}-\lambda_{i})\Bigl(\frac{1}{2n}\sum_{x}u_i(x)^2d_x-I_g(u_i)\Bigr)+\sum_{j}^{k}\Bigl(\lambda_{k+1}-\frac{\lambda_i+\lambda_{j}}{2}\Bigr)(\lambda_i-\lambda_j)a_{ij}^2
    \notag\\\le 4\sum_{x\in \Omega}\rho^{-1}(x)\Gamma(g,u_i)^2d_x.\end{align}
Then,by the same procedure,we have
\begin{align*}
\lambda_{k+1}\Bigl(1-\frac{\rho_{max}}{k}\sum_{i}^{k}\lambda_i\Bigr)\le \frac{1}{k}\Bigl(1+\frac{4\rho_{max}}{n\rho_{min}}\Bigr)\sum_{i}^{k}\lambda_i-\frac{\rho_{max}}{k}\sum_{i}^{k}\lambda_i^2.
\end{align*}
This proves the Theorem \ref{thm:1.2}.
\end{proof}
\begin{proof}[Proof of Theorem 1.3]
By \eqref{eq:3.17}, we also have
\begin{align}
\frac{1}{2n}\sum_{x}u_i(x)^2d_x-I_g(u_i)+\sum_{j}(\lambda_i-\lambda_j)a_{ij}^2\le \frac{4\sum_{x\in\Omega}\rho^{-1}(x)\Gamma(g,u_i)^2d_x}
    {\lambda_{k+1}-\lambda_{i}}.
\end{align} 
And,by the same procedure above, we abtain HP inequality
\begin{align*}
\sum_{i}^{k}\frac{\lambda_i}{\lambda_{k+1}-\lambda_i}\ge \frac{nk}{4}\Bigl(\frac{\rho_{min}}{\rho_{max}}-\frac{\rho_{min}}{k}\sum_{i}^{k}\lambda_i\Bigr).
\end{align*}
Which proves theorem \ref{thm:1.3}.
\end{proof}
\begin{proof}[Proof of Theorem 1.4]
Change $\lambda_{k+1}-\lambda_i$ into $\lambda_{k+1}-\lambda_{k}$,with the same procedure,we deduce PPW inequality
\begin{align*}
(\lambda_{k+1}-\lambda_k)\Bigl(\frac{\rho_{min}}{\rho_{max}}-\frac{\rho_{min}}{k}\sum_{i}^{k}\lambda_i\Bigr)\le\frac{4}{nk}\sum_{i}^{k}\lambda_i.
\end{align*}
We prove the Theorem \ref{thm:1.4}.
\end{proof}
\bibliographystyle{plain}
\bibliography{schodinger}
\addcontentsline{toc}{section}{参考文献}
\end{document}